 \theoremstyle{plain}
 \newtheorem{theorem}{Theorem}[section]
 \newtheorem{lemma}{Lemma}[section]
\theoremstyle{definition}
 \newtheorem{definition}{Definition}[section]
 \newtheorem{example}{Example}[section]
 \newtheorem{remark}{Remark}
\theoremstyle{remark}
\journal{ELSEVIER}
\begin{document}

\begin{frontmatter}


 \title{Remarks on martingale representation theorem for set-valued martingales}

 \author[label1]{Jinping Zhang}
 \address[label1]{School of Mathematics and Physics, North China Electric Power University,
Beijing, 102206, P.R.China}
\address[label2]{Graduate School of Science, Kyoto University, Kyoto, 606-8501, Japan }
\ead{zhangjinping@ncepu.edu.cn}
 \author[label2]{Kouji Yano}
 \ead{kyano@math.kyoto-u.ac.jp}
\begin{abstract}
Martingale representation theorem for set-valued martingales was proposed by M. Kisielewicz [J. Math. Anal. Appl. 2014]. We shall prove that the result holds only for very special case: the set-valued martingale degenerates to the point-valued one. A revised representation theorem for a special kind of non-degenerate set-valued martingales is presented.
\end{abstract}

\begin{keyword}
Set-Valued Martingale  \sep Interval-Valued Martingale \sep   Martingale Representation Theorem

\MSC 65C30 \sep 26E25 \sep 54C65
\end{keyword}

\end{frontmatter}


\section{Introduction}
\label{author_sec:1}
 Set-valued function is a natural extension of point-valued one, which has received much attention. A set is called {\it degenerate} if it is a singleton and otherwise {\it non-degenerate}. A difficulty to handle non-degenerate set-valued functions lies in the fact that the power set of a set is not linear. For example, let $(\Omega, \mathscr A, P)$ be a probability space, $\lambda, \eta, a,b, c,d\in\mathbb R$ be real numbers with $a<b, c<d$ and $f,g$ ($f\leq g $ a.s.) be two integrable random variables taking values in $\mathbb R$.   Define the operations $\lambda[a,b]:=\left \{\lambda h: a\leq h\leq b\right \}$ and $[a,b]+[c,d]:=[a+c, b+d]$. We then remark that $(\lambda+\eta)[a,b]\neq\lambda [a,b]+\eta[a,b]$ if $\lambda$ and $\eta$ have different signs. For the expectation (which will be defined by \eqref{eqn:expectation}) $E(\lambda[f,g])=\lambda E([f,g])$ holds, while $E(f[a,b])=E(f)\times [a,b]$ may not hold.  We have to be very careful to deal with set-valued variables.

 Martingales are a very important class of stochastic processes with nice properties and wide applications. For set-valued case, Hiai and Umegaki 1977 \cite{Hia} defined set-valued martingale, supermartingale and submartingale. After that, there have been many works studying set-valued martingales. For example, based on the definition of set-valued stochastic integral with respect to Brownian motion given by Jung and Kim \cite{Jun}, Zhang et al. \cite{Zha09,zhang20} obtained the submartingale property of set-valued stochastic integrals with respect to Brownian motion and to compensated Poisson random measure. Furthermore, in \cite{zhang20}, by using the Hahn decomposition theorem and the properties of compensated Poisson random measure, the authors proved that the non-degenerate set-valued stochastic integral w.r.t. the compensated Poisson measure is not a set-valued martingale but a submartingle. For the non-degenerate case w.r.t. Brownian motion, in a very simple way, we shall show that it is neither a set-valued martingale.
 The martingale representation theorem for point-valued martingale plays an important role in classical stochastic analysis,  see e.g. Theorem 4.3.4 in \cite{oksendal}. For set-valued case, is there a similar representation theorem? It is our task in this short paper to answer the question.

When taking as the underlying space the $r$-dimensional Euclidean space $\mathbb R^{r}$, Kisielewicz in \cite{Kis2014} (2014) proposed a representation theorem for  set-valued martingale as follows:
\begin{theorem}\label{kis}
 For every set-valued $\mathbb A$-martingale $F=(F_{t})_{t\geq 0}$ defined on a filtered probability space $(\Omega, \mathscr A, \mathbb A, P)$ with the augmented natural filtration $(\mathscr A_{t})_{t\geq 0}$ of an $d$-dimensional Brownian motion $B=(B_{t})_{t\geq 0}$ defined on $(\Omega, \mathscr {A}, P)$ and such that $F_{0}=\left \{0\right \}$ there exists a set ${\mathscr G}\in {\mathscr P} ({\mathcal L}^{2}_{\mathbb A})$ such that $F_{t}=\int_{0}^{t}\mathscr{G} dB_{\tau}$ a.s. for every $t\geq 0$.
\end{theorem}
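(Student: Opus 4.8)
The plan is to reduce the set-valued representation to the classical point-valued one via a Castaing decomposition, apply the ordinary martingale representation theorem selection by selection, and then reassemble the resulting integrands into a single integrand set $\mathscr{G}$. First I would represent the set-valued martingale through its selections: since $(\mathscr{A}_t)$ is the augmented natural filtration of $B$ and $F=(F_t)$ is a set-valued $\mathbb{A}$-martingale with closed convex square-integrable values, the Hiai--Umegaki theory \cite{Hia} furnishes a Castaing representation, i.e.\ a countable family of $\mathbb{R}^d$-valued $\mathbb{A}$-martingales $(f^n_t)_{t\ge 0}$, $n\in\mathbb{N}$, such that $F_t=\overline{\{f^n_t:n\in\mathbb{N}\}}$ a.s.\ for every $t\ge 0$. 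Because $F_0=\{0\}$, each selection satisfies $f^n_0=0$.

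Next I would apply the classical martingale representation theorem (Theorem 4.3.4 in \cite{oksendal}) to each square-integrable point-valued martingale $f^n$. As the filtration is exactly the Brownian one, there is a unique integrand $g^n\in\mathcal{L}^2_{\mathbb{A}}$ with $f^n_t=\int_0^t g^n_\tau\,dB_\tau$ a.s.\ for all $t\ge 0$. I would then set $\mathscr{G}:=\overline{\{g^n:n\in\mathbb{N}\}}\in\mathscr{P}(\mathcal{L}^2_{\mathbb{A}})$, the closure being taken in $\mathcal{L}^2_{\mathbb{A}}$ (passing to the closed decomposable convex hull if required so that $\mathscr{G}$ is a genuine element of $\mathscr{P}(\mathcal{L}^2_{\mathbb{A}})$), and recall that the set-valued It\^o integral is defined as the closure of the selection integrals, $\int_0^t\mathscr{G}\,dB_\tau=\overline{\{\int_0^t g_\tau\,dB_\tau:g\in\mathscr{G}\}}$.

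The identity $F_t=\int_0^t\mathscr{G}\,dB_\tau$ would then follow from the It\^o isometry: for fixed $t$ the map $I_t:g\mapsto\int_0^t g_\tau\,dB_\tau$ is a linear isometry from $\mathcal{L}^2_{\mathbb{A}}$ restricted to $[0,t]$ into $L^2(\Omega,\mathscr{A}_t,P)$, so it carries closures to closures, giving $I_t(\mathscr{G})=\overline{I_t(\{g^n\})}=\overline{\{f^n_t:n\in\mathbb{N}\}}=F_t$, which is exactly the asserted equality.

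I expect the main obstacle to lie entirely in the reassembly step, and to be two-fold. First, one must verify that $\mathscr{G}$ is a legitimate decomposable, measurable element of $\mathscr{P}(\mathcal{L}^2_{\mathbb{A}})$ whose selections are precisely the $\mathcal{L}^2$-limits of the $g^n$; this requires a measurable selection argument (Kuratowski--Ryll-Nardzewski) together with a careful matching of the convex-closure conventions used in the Castaing representation and in the definition of the set-valued integral. Second, and more delicately, the isometry argument yields $F_t=\int_0^t\mathscr{G}\,dB_\tau$ only for each fixed $t$ outside a $t$-dependent null set; upgrading this to a single null set valid simultaneously for all $t\ge 0$, so that the identity holds as a statement about the whole process, is the step where the real care is needed and must be secured before the representation can be asserted as stated.
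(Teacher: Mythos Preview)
The paper does not prove this theorem at all: Theorem~\ref{kis} is quoted from Kisielewicz \cite{Kis2014} as the target of the paper's critique, not as a result the authors establish. What the paper does instead is show, via Lemma~\ref{mean0} and Example~\ref{ex:2}, that the hypothesis $F_{0}=\{0\}$ forces $E(F_{t})=\{0\}$ for every $t$, and hence (since the probability space is non-atomic) each $F_{t}$ must be a singleton. In other words, the only set-valued martingales covered by Theorem~\ref{kis} are the degenerate point-valued ones, and the statement collapses to the classical representation theorem. So there is no genuine set-valued argument in the paper for you to compare against.

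Your proposal, by contrast, attempts to give an honest set-valued proof via Castaing representation plus selection-wise classical representation. Under the hypothesis $F_{0}=\{0\}$ this would go through trivially, because all the martingale selections $f^{n}$ coincide a.s.\ and your $\mathscr{G}$ is a singleton. But as a template for the non-degenerate case the reassembly step has a real gap: the set-valued integral is defined through the \emph{decomposable} closure $\overline{de}\{\int_{0}^{t}g\,dB:g\in\mathscr{G}\}$, not the plain $L^{2}$-closure you invoke. The It\^o isometry $I_{t}$ does carry closures to closures, but decomposability in $\mathcal{L}^{2}_{\mathbb{A}}$ (with respect to the predictable $\sigma$-field on $[0,T]\times\Omega$) does not correspond to decomposability in $L^{2}(\Omega,\mathscr{A}_{t})$, so $I_{t}(\mathscr{G})$ as a set-valued random variable is in general strictly larger than $\overline{\{f^{n}_{t}\}}=F_{t}$. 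This mismatch is exactly why the paper's Example~\ref{ex:1} shows that non-degenerate set-valued It\^o integrals are strict submartingales, not martingales, and is the mechanism behind the paper's conclusion that Theorem~\ref{kis} has no non-trivial content.
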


\noindent Here ${\mathcal L}^{2}_{\mathbb A}$ denotes the family of all $\mathbb R^{r\times d}$-valued integrable (w.r.t. the Brownian motion $B$) stochastic processes and ${\mathscr P} ({\mathcal L}^{2}_{\mathbb A})$ denotes the power set of ${\mathcal L}^{2}_{\mathbb A}$. For $\mathscr G\in {\mathscr P} ({\mathcal L}^{2}_{\mathbb A})$, $\int_{0}^{t}\mathscr{G} dB_{\tau}$ is the generalized stochastic integral defined in \cite{Kis2015}, which is a set-valued random variable for each $t$.

We will see in Example \ref{ex:2} that Theorem \ref{kis} imposes so strong assumption $F_{0}=\{0\}$ as to exclude non-degenerate set-valued martingales. We will propose in Theorem \ref{main} a revised representation theorem.

This paper is organized as follows. In Section 2 we recall several notations and preliminary facts about set-valued processes. In Section 3 we develop our main theorem. We shall prove that the above representation theorem does not hold for non-degenerate set-valued martingale.  A revised martingale representation will be given. Except for a special kind of non-degenerate set-valued martingale, the general non-degenerate set-valued martingale has no representation theorem.
\section{ Notations and Preliminaries}
\label{author_sec:2}

Throughout the paper, let $T$ be a positive number. We consider the complete non-atomic probability space $(\Omega, \mathscr {A}, P)$ equipped with a  filtration  $\mathbb A=(\mathscr {A}_{t})_{t\in [0, T]}$ which satisfies the usual condition. 
We denote by $(\frak X,\| \cdot \|)$ a separable Banach space equipped with the Borel sigma-algebra $\mathscr {B}(\frak X)$.  $K(\frak X)$ denotes the family of all nonempty closed subsets of $\frak X$.  $K_{c}(\frak X)$ (resp. $K_{bc}(\frak X)$, $K_{kc}(\frak X)$) is the family of all nonempty closed convex (resp. nonempty bounded closed convex, nonempty compact convex) subsets of $\frak X$. For a set $C\subset\frak X$, we write $\|C \|:=\sup_{x\in C}\|x\|$. $L^{p}(\Omega, {\mathscr A}, P; \frak X)$=$L^{p}(\Omega; \frak X) $ ($1\leq p<\infty $) is the set of all $\frak X$-valued Borel measurable functions $f:
\Omega\rightarrow \frak X$ equipped with the norm
$
 \|f\|_{p}:=\left \{\int_{\Omega}\|f(\omega)\|^{p}dP(\omega)\right \}^{\frac{1}{p}}<\infty.
$

 A mapping $F: \Omega\rightarrow K(\frak X)$ is said to be a {\it set-valued random variable (or a random set)} if for each open set $O\subset \frak X$, we have $\left \{\omega\in \Omega: F(\omega)\cap O \neq \emptyset\right \}\in \mathscr A$. We denote the family of all $K(\frak X)$-valued random variables  by ${\cal M}(\Omega, {\mathscr A}, P; K(\frak X))$, or briefly by
${\cal M}(\Omega; K(\frak X))$. Similarly, we also have the notations ${\cal M}(\Omega; K_{c}(\frak X))$, ${\cal M}(\Omega; K_{bc}(\frak X))$ and ${\cal M}(\Omega; K_{kc}(\frak X))$.
For $F\in {\cal M}(\Omega, K(\frak X))$, the family of all $L^{p}$-
integrable selections is denoted by

$$
S^{p}_{F}({\mathscr A}):=\left \{f\in L^{p}(\Omega, {\mathscr A}, P; \frak X) : f (\omega)\in
F(\omega) \
 a.s.\right \}, 1\leq p< \infty,
$$
or briefly by
$S_{F}^{p}$.
A set-valued random variable $F$ is said to be {\em integrable} if
$S_{F}^{1}$ is nonempty. $F$ is called {\em $L^{p}$-integrably bounded} if there exists $h\in L^{p} (\Omega, {\mathscr A},
P; \mathbb R)$ s.t. $\|F(\omega)\|\leq h(\omega)$, i.e. [for all $x\in F(\omega)$ we have $\|x\|\leq h (\omega)$] almost surely.
 The family of all $K(\frak X)$-valued $L^{p}$-integrably bounded random variables is
denoted by $L^{p}(\Omega, {\mathscr A}, P; K(\frak X))$, or briefly by $L^{p}(\Omega; K
(\frak X))$.  Similarly, we have notations  $L^{p}(\Omega; K_{c} (\frak X))$, $L^{p}(\Omega; K_{bc} (\frak X))$ and $L^{p}(\Omega; K_{kc} (\frak X))$.

Let $\Gamma$ be a set of measurable functions $f: \Omega \rightarrow
\frak X$. We call $\Gamma$ {\em decomposable} with respect to the
$\sigma$-algebra $\mathscr A$ if for any finite $\mathscr A$-measurable partition
$A_{1},.., A_{n}$ of $\Omega$ and for any $f_{1},..., f_{n}\in \Gamma $ it
follows that $1_{A_{1}}f_{1}+...+1_{A_{n}}f_{n}\in \Gamma$,
where $1_ {A}$ is the indicator function of  $A$. From \cite{Hia}, we know that a nonempty  set $\Gamma$ ($ \subset L^{p}(\Omega, {\mathscr A}, P; \frak X)$) determines a $p$-integrable set-valued random variable $F$  such that $\Gamma=S_ {F}^{p} $
if and only if $\Gamma$ is decomposable with respect to $\mathscr A $. Note also that $F(\omega)=G(\omega)$ a.s. iff  $S_{F}^{p}=S_{G}^{p}$. Therefore, in order to study $F$, we have only to study $S_{F}^{p}$.

There is the Castaing representation for a $p$-integrable set-valued
random variable.

\begin{lemma}[{\cite{Hia}}]\label{lem:2.2}
For a $p$-integrable  set-valued random variable $F\in {\cal M}(\Omega, {\mathscr A}, P; K(\frak X))$, there exists a sequence $\left \{f^{i}: i\in \mathbb N\right \}\subset S_{F}^{p}$ such that
$F(\omega)=cl\left \{f^{i}(\omega): i\in \mathbb N\right \}$ for all $\omega \in
\Omega$, where the closure is taken in $\frak X$. In addition, $S_{F}^{p}=\overline{de}\left \{f^{i};i=1,2,...\right \}$, where the $\overline{de}$ denotes the decomposable closure of the sequence $\left \{f^{i}: i=1,2,...\right \}$ in the space $L^{p}(\Omega; \frak X)$.
\end{lemma}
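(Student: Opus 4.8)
The plan is to establish the two assertions separately: first construct the Castaing sequence by a measurable selection argument, then identify $S_F^p$ with the decomposable closure. For the construction, since $\frak X$ is separable I would fix a countable dense set $\{x_k : k \in \mathbb N\} \subset \frak X$ and a reference selection $f_{0}\in S_F^p$ (nonempty by $p$-integrability). For each pair $(k,n)$ I set $A_{k,n} := \{\omega : d(x_k, F(\omega)) \le 1/n\}$, where $d(x, F(\omega)) := \inf_{y \in F(\omega)} \|x - y\|$. I would first recall that, for closed-valued $F$ in a separable space, measurability in the sense of the definition above is equivalent to measurability of $\omega \mapsto d(x, F(\omega))$ for every $x$, so that each $A_{k,n} \in \mathscr A$. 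On $A_{k,n}$ the multifunction $\omega \mapsto F(\omega) \cap B(x_k, 1/n)$ (closed ball) is again measurable with nonempty closed values, so by the Kuratowski--Ryll-Nardzewski selection theorem it admits a measurable selection $h_{k,n}$; I then define $f^{k,n} := 1_{A_{k,n}} h_{k,n} + 1_{A_{k,n}^{c}} f_{0}$ and relabel the countable family $\{f^{k,n}\}$ as $\{f^{i} : i \in \mathbb N\}$.

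For integrability, on $A_{k,n}$ one has $\|h_{k,n}(\omega)\| \le \|x_k\| + 1/n$, while off $A_{k,n}$ the value is $f_{0}(\omega)$ with $f_{0}\in L^{p}$; hence every $f^{i} \in S_F^p$. For density, fix $\omega$ and $x \in F(\omega)$: given $n$, choose $x_k$ with $\|x - x_k\| < 1/(2n)$, so $\omega \in A_{k,n}$ and $\|f^{k,n}(\omega) - x\| \le \|h_{k,n}(\omega) - x_k\| + \|x_k - x\| < 1/n + 1/(2n)$. Letting $n \to \infty$ shows $x \in \mathrm{cl}\{f^{i}(\omega)\}$, and the reverse inclusion is immediate since $f^{i}(\omega) \in F(\omega)$; thus $F(\omega) = \mathrm{cl}\{f^{i}(\omega): i \in \mathbb N\}$.

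For the decomposable closure, the inclusion $\overline{de}\{f^{i}\} \subseteq S_F^p$ is clear, since $S_F^p$ is decomposable (it is the selection set of a random set) and closed in $L^{p}$ and contains each $f^{i}$. For the converse, take $g \in S_F^p$ and $\varepsilon > 0$. Using the density just established I would build a measurable partition $A_{i} := \{\omega : \|g(\omega) - f^{i}(\omega)\| < \varepsilon,\ \|g(\omega) - f^{j}(\omega)\| \ge \varepsilon \text{ for } j < i\}$, so that $\sum_{i} 1_{A_{i}} f^{i}$ is within $\varepsilon$ of $g$ pointwise. Truncating at a level $N$ large enough that $P(\bigcup_{i>N} A_{i})$ is small and replacing the tail by $f^{1}$ yields a \emph{finite} decomposable combination $\sum_{i \le N} 1_{A_{i}} f^{i} + 1_{\bigcup_{i>N} A_{i}} f^{1}$, whose $L^{p}$-distance to $g$ is controlled by $\varepsilon$ on the head and, on the tail, by absolute continuity of the integral applied to the $L^{p}$ functions $g$ and $f^{1}$. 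Letting $\varepsilon \to 0$ gives $g \in \overline{de}\{f^{i}\}$.

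The delicate points are all in the first part: verifying that $\omega \mapsto d(x, F(\omega))$ is measurable and that $F \cap B(x_k, 1/n)$ inherits measurability with closed values, and then invoking a measurable selection theorem while simultaneously forcing the selections to lie in $L^{p}$, which I handle by gluing with the reference selection $f_{0}$. The decomposable-closure step is comparatively routine, the only care being the truncation needed to pass from countable to finite combinations.
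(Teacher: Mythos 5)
The paper gives no proof of this lemma at all: it is quoted verbatim from Hiai and Umegaki \cite{Hia}, so the comparison is with the standard Castaing-representation argument, which is exactly what you are reconstructing. Your second half (the decomposable closure: the partition $A_i$, truncation at level $N$, tail replaced by $f^1$, absolute continuity of the integral) is correct and is essentially the argument in the literature. The gap is in the selection step. You define $A_{k,n}=\{\omega : d(x_k,F(\omega))\le 1/n\}$ and then apply Kuratowski--Ryll-Nardzewski to $\omega\mapsto F(\omega)\cap \bar B(x_k,1/n)$, claiming this has nonempty closed values on $A_{k,n}$. In an infinite-dimensional Banach space the distance to a closed set need not be attained, so on the event $\{d(x_k,F(\omega))=1/n\}$ the set $F(\omega)\cap\bar B(x_k,1/n)$ can be empty; your multifunction then has no selection there and KRN does not apply. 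Moreover, even where it is nonempty, measurability of the intersection of $F$ with a fixed \emph{closed} ball is not automatic from the definition of measurability used in the paper (open sets hitting $F$), since $\bar B\cap O$ is not open; one needs, e.g., Himmelberg's intersection theorem together with completeness of $(\Omega,\mathscr A,P)$, which you assert rather than prove.

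Both defects disappear with the standard modification: set $A_{k,n}:=\{\omega: F(\omega)\cap B(x_k,1/n)\neq\emptyset\}$ with the \emph{open} ball, which is measurable directly from the definition of a set-valued random variable, and apply KRN to $G_{k,n}(\omega):=cl\bigl(F(\omega)\cap B(x_k,1/n)\bigr)$ on $A_{k,n}$. This multifunction is nonempty and closed-valued, contained in $F(\omega)$, and measurable because $\{\omega: G_{k,n}(\omega)\cap O\neq \emptyset\}=\{\omega: F(\omega)\cap (B(x_k,1/n)\cap O)\neq\emptyset\}$ for every open $O$, the latter set being open. Your gluing with the reference selection $f_0$, the bound $\|h_{k,n}(\omega)\|\le\|x_k\|+1/n$ on $A_{k,n}$, and the density estimate then go through verbatim. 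One last small point: since $f_0\in S_F^p$ is a selection only almost surely, your construction yields $F(\omega)=cl\{f^i(\omega): i\in\mathbb N\}$ only a.s., whereas the lemma states it for all $\omega$; Hiai--Umegaki obtain the everywhere statement by working with everywhere-defined selections, and in any case the a.s.\ version suffices for every use of the lemma in this paper. With these repairs your proof is precisely the argument the citation points to.
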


The {\em integral (or
expectation)} of a set-valued random variable $F$ was defined by
Aumann (\cite{Aum}) as
\begin{equation}\label{eqn:expectation}
E(F):=\left \{E(f): f\in S_{F}^{1}\right \},
\end{equation}
where $E(f)=\int_{\Omega}fdP$ is the Bochner integral.
If the probability space is non-atomic, we have $cl\left \{E(F)\right \}$ is convex. In general, the expectation $E(F)$ is not closed. But under some conditions, it is closed. For example,  if the Banach space $\frak X$ has the Radon Nikodym property, and if $F\in L^{1}(\Omega, K_{kc}(\frak X))$ then $E(F)$ is closed in $\frak X$. Moreover, if the space $\frak X$ is reflexive, and if $F\in L^{1}(\Omega, K_{c}(\frak X))$, then the expectation $E(F)$ is closed in $\frak X$ (\cite{Hia}). It is well-known that finite dimensional spaces and $L^{p} (\Omega; \frak X)\  (1<p<\infty)$ are reflexive.

A set-valued stochastic process $F=\left \{F_{t}: 0\leq t\leq T\right \}$ is called {\em uniformly integrable} if the real-valued stochastic process $\|F\|=\left \{ \|F_{t}\|: 0\leq t\leq T\right \}$ is uniformly integrable.

$F=\left \{F_{t}: 0\leq t\leq T\right \}$ is called an {\it $\mathbb A$-adapted set-valued stochastic process} if $F_{t}\in {\cal M}(\Omega, \mathscr A_{t}, P; K(\frak X))$ for each $t$. $f=\left \{f_{t}: 0\leq t\leq T\right \}$ is called a {\it martingale selection} of $F$ if $f$ is an $\frak X$-valued  $\mathbb A$-martingale and  $f_{t}(\omega)\in F_{t}(\omega)$ a.s. for each $t$. The family of all martingale selections of $F$ is denoted by $MS(F)$. The set-valued  {\it conditional expectation} $E[F_{t}|\mathscr A_{s}]$ is defined by $S^{1}_{E[F_{t}|\mathscr A_{s}]}(\mathscr A_{s}):=cl\left \{E[f_{t}|\mathscr A_{s}]: f_{t} \in S^1_{F_{t}}(\mathscr A_{t})\right \}$ for $0\leq s\leq t$, where the closure is taken in $L^{1}(\Omega; \frak X)$.

\begin{definition}\label{martingale}
An integrable convex set-valued $\mathbb A$-adapted stochastic
process $F=\left \{F_{t}: 0\leq t\leq T \right \}$ is called a {\it set-valued
$\mathbb A$-martingale} if for any $0\leq s \leq t$ it holds that
$E(F_{t}|{\mathscr{A}}_{s})=F_{s}$ in the sense of
$S_{E(F_{t}|{\mathscr{A}}_{s})}^{1}({\mathscr{A}}_{s})=S_
{F_{s}}^{1}({\mathscr{A}}_{s})$.
It is called a {\it set-valued submartingale (resp. supermartingale)} if for any
$0\leq s\leq t$, $E(F_{t}|{\mathscr{A}}_ {s})\supset F_{s}$
(resp. $E[F_{t}|{\mathscr{A}}_{s}]\subset F_{s} $) in the sense
of $S_{E(F_{t}|{\mathscr{A}}_{s})}^{1}({\mathscr{A}}_{s}) \supset S_
{F_{s}}^{1}({\mathscr{A}}_{s})$ (resp.
$S_{E(F_{t}|{\mathscr{A}}_{s})}^{1}({\mathscr{A}}_{s}) \subset S_
{F_{s}}^{1}({\mathscr{A}}_{s})$).
\end{definition}
A set-valued martingale has the  property: $cl\left \{E(F_{t})\right \}=cl\left \{E(F_{0})\right \}$ for every $0\leq t\leq T$.

  In the case $\frak X=\mathbb R$, it is known (Theorem 3.1.1 of \cite{thesis}) that $\left \{F_{t}=[f_{t}, g_{t}]: 0\leq t\leq T \right \}$ is an interval-valued $\mathbb A$-martingale iff both the endpoints are $\mathbb R$-valued $\mathbb A$-martingales. For discrete time, the result holds too.

\section{Main result}

Let $(B_{t})_{0\leq t\leq T}$ be a real-valued standard Brownian motion.  Let $\mathbb A=(\mathscr {A}_{t})_{t\in [0, T]}$ denote the augmented natural filtration of $(B_{t})_{0\leq t\leq T}$. We adopt the notations of Section \ref{author_sec:2}.

\begin{theorem}\label{main}
Suppose $(\frak X, \|\cdot\|)$ is a separable reflexive M-type 2 Banach space.  Let $\left \{M_{t}, 0\leq t\leq T\right \}$ be a  $K_{c}(\frak X)$-valued $\mathbb A$-martingale. Then the following statements are equivalent:

\noindent (i) There exists a set-valued stochastic process $(G_{t})_{0\leq t\leq T}$ such that

\begin{equation}\label{eqn:representation}
M_{t}=E(M_{0})+\int_{0}^{t}G_{s}dB_{s}\ \  a.s. \ for  \ each \ t.
\end{equation}

\noindent (ii) There exists an $\frak X$-valued stochastic process $g=\left \{g_{t}: t\in[0, T]\right \}$  and a bounded, closed and convex subset $C\subset \frak X$ such that $$M_{t}=C+\left\{\int_{0}^{t}g_{s}dB_{s}\right\}\  a.s. \ for \ each \ t.$$

\noindent (iii)
There exists a sequence  $\left \{f^{1},\cdots,f^{n},\cdots\right \}$ of $\frak X$-valued $\mathbb A$-martingales such that
 $$M_{t}=cl\left\{f^{1}_{t},\cdots, f^{n}_{t},\cdots\right\} \ a.s. \ for \ each \ t$$
 and  $f_{t}^{i}-f^{j}_{t}$ is non-random and independent of $t$ for any $i, j\geq 1$.

\end{theorem}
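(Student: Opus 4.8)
The plan is to establish the three implications by exploiting a single structural fact that underlies the whole paper: the generalized set-valued stochastic integral $\int_0^{\cdot} G_s dB_s$ with respect to Brownian motion is a genuine set-valued martingale \emph{only} when the integrand $G$ is (almost everywhere) single-valued, in which case it degenerates to the singleton $\{\int_0^{\cdot} g_s dB_s\}$. I will prove \emph{(ii)$\Rightarrow$(i)}, \emph{(i)$\Rightarrow$(ii)}, \emph{(ii)$\Rightarrow$(iii)} and \emph{(iii)$\Rightarrow$(ii)}, which together give all the equivalences. Granting the structural fact, \emph{(ii)$\Rightarrow$(i)} is immediate: taking $G_s=\{g_s\}$ one has $\int_0^t G_s dB_s=\{\int_0^t g_s dB_s\}$, while evaluating the representation at $t=0$ gives $M_0=C$, whence $E(M_0)=E(C)=C$, since the mean of any $C$-valued integrable selection lies in the closed convex set $C$ (Hahn--Banach) and every point of $C$ is realized by a constant selection. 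Thus $M_t=E(M_0)+\int_0^t G_s dB_s$.

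For the crucial implication \emph{(i)$\Rightarrow$(ii)}, write $I_t:=\int_0^t G_s dB_s$ and first argue that $I$ must itself be a set-valued martingale. Since $\mathscr A_0$ is trivial, $E(M_0)$ is a deterministic set, which is closed (reflexivity of $\frak X$ together with $M_0\in L^1(\Omega;K_c(\frak X))$), convex (non-atomicity of $P$) and bounded; pulling this constant set through the set-valued conditional expectation and using its additivity over Minkowski sums gives $E(E(M_0)+I_t\mid\mathscr A_s)=E(M_0)+E(I_t\mid\mathscr A_s)$. As $M$ is a martingale this equals $M_s=E(M_0)+I_s$, so the R\aa dstr\"om cancellation law (legitimate because the cancelled set $E(M_0)$ is bounded closed convex and the remaining sets are closed convex) yields $E(I_t\mid\mathscr A_s)=I_s$. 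Hence $I$ is a set-valued martingale, forcing $G_s=\{g_s\}$ a.e. by the structural fact above; setting $C:=E(M_0)$ delivers (ii).

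It remains to identify (ii) and (iii). For \emph{(ii)$\Rightarrow$(iii)}, use separability of $\frak X$ to write $C=cl\{c^i:i\in\mathbb N\}$ for a countable dense subset, put $m_t:=\int_0^t g_s dB_s$, and define $f^i_t:=m_t+c^i$. Each $f^i$ is an $\frak X$-valued $\mathbb A$-martingale, the differences $f^i_t-f^j_t=c^i-c^j$ are non-random and independent of $t$, and since translation by $m_t(\omega)$ is a homeomorphism, $cl\{f^i_t\}=\{m_t\}+cl\{c^i\}=\{m_t\}+C=M_t$. Conversely, for \emph{(iii)$\Rightarrow$(ii)}, set $c^i:=f^i_\cdot-f^1_\cdot$ (a constant) so that $M_t=\{f^1_t\}+cl\{c^i\}$; the set $C_0:=cl\{c^i\}$ is closed, is convex because $C_0=M_t-f^1_t$ is a translate of the convex set $M_t$, and is bounded because $M_0=\{f^1_0\}+C_0$ is bounded (by the integrable boundedness of $M$). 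Applying the $\frak X$-valued martingale representation theorem available in separable M-type $2$ spaces to the point martingale $f^1$ gives $f^1_t=f^1_0+\int_0^t g_s dB_s$, so $M_t=(C_0+f^1_0)+\{\int_0^t g_s dB_s\}$ and $C:=C_0+f^1_0$ is bounded closed convex, which is (ii).

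The main obstacle is the crux implication \emph{(i)$\Rightarrow$(ii)}: everything rests on the non-degeneracy dichotomy for the set-valued It\^o integral, namely that in the non-degenerate case $\int_0^{\cdot}G_s dB_s$ is a strict submartingale and never a martingale. This is exactly the phenomenon the paper isolates, so the proof will invoke it; the remaining care is technical, ensuring that the conditional-expectation additivity and the R\aa dstr\"om cancellation are applied to bounded closed convex sets (which is where reflexivity, M-type $2$ and integrable boundedness of $M$ enter) and that the point-valued Banach-space martingale representation theorem is legitimately available under the stated hypotheses on $\frak X$.
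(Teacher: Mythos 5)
Your proposal is correct in substance, and most of its skeleton coincides with the paper's: your (ii)$\Rightarrow$(iii) is exactly the paper's argument (write the constant set as the closure of a countable set and translate by $\int_0^t g_s\,dB_s$), and your (iii)$\Rightarrow$(ii) is the paper's (iii)$\Rightarrow$(i) in all but its target --- apply the point-valued martingale representation theorem to one martingale selection and use the rigidity of the differences $f^i-f^j$ --- while you prove four implications (adding the easy (ii)$\Rightarrow$(i)) where the paper closes the cycle (i)$\Rightarrow$(ii)$\Rightarrow$(iii)$\Rightarrow$(i). The genuine divergence is in (i)$\Rightarrow$(ii). The paper's route is lighter: from $M_t=E(M_0)+I_t$ it takes \emph{unconditional} expectations, uses the constancy of expectations of a set-valued martingale to get $cl\left\{E(M_0)+E(I_t)\right\}=E(M_0)$, hence $E(I_t)=\{0\}$, and then applies Lemma \ref{mean0} directly to force degeneracy of $I_t$ and hence of $G$. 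You instead first prove the conceptually appealing intermediate statement that $I_t$ is itself a set-valued martingale, which needs two tools the paper never touches: additivity of the set-valued \emph{conditional} expectation over Minkowski sums with a deterministic summand, and the R\aa dstr\"om cancellation law applied $\omega$-wise to random sets (where convexity of $I_s$ and of $E(I_t\mid\mathscr A_s)$ is not automatic, since the set-valued It\^o integral need not be convex-valued); only then do you invoke the structural fact, which is the paper's Example \ref{ex:1} and is itself a consequence of Lemma \ref{mean0}. So your detour is valid but strictly heavier: the paper performs its one cancellation-type step on the fixed sets $E(M_0)$ and $E(I_t)$ rather than on random sets under conditional expectation. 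Note also that both your proof and the paper's share the same implicit assumption that $E(M_0)$ is bounded (the theorem only assumes $M$ integrable); that boundedness is what makes any cancellation step legitimate and what makes the $C$ of (ii) bounded, so this is a gap of the statement rather than of your argument specifically.
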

\begin{remark} For the M-type 2 Banach space, the set-valued stochastic integral w.r.t. Brownian motion $I_{t}(G)=\int_{0}^{t}G_{s}dB_{s}$ is defined in Definition 4.2  in \cite{Zha09}, which is a $K(\frak X)$-valued random variable for each $ t$. $I_{t}(G)$ is determined by $$S_{I_{t}(G)}^{2}(\mathscr A_{t}):=\overline{de}\left\{\int_{0}^{t}g_{s}dB_{s}: g \ is \ the \ integrable \ selection\ of \ G \right\}.$$  If $\frak X$ is not reflexive,  replacing \eqref{eqn:representation} by
 $M(t)=cl\left \{E(M_{0})\right \}+\int_{0}^{t}G_{s}dB_{s} \  $, the result still holds. It is also valid for  $\frak X=\mathbb R^{r}$ with $d$-dimensional Brownian motion and corresponding $K(\mathbb R^{r\times d})$-valued integrable stochastic process $G$.
\end{remark}
In the following, the family of all $\frak X$-valued square-integrable (w.r.t  Brownian motion) stochastic processes is denoted by ${\cal L}^{2}(\frak X)$. The family of all $K(\frak X)$-valued integrable (w.r.t. Brownian motion) is denoted by ${\cal L}^{2}(K(\frak X))$.
Before prove Theorem \ref{main}, firstly, we give a result about expectation of set-valued random variable.
\begin{lemma}\label{mean0}
For any set-valued random variable $F\in L^{1}(\Omega, K(\frak X))$ and any deterministic element $a\in\frak X$, the expectation $E(F)=\left \{a\right \}$  iff $F$ degenerates to a random singleton $\left \{f\right \}$ with $E(f)=a$.
\end{lemma}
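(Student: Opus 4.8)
The plan is to prove the two implications separately. The ``if'' direction is immediate: if $F = \{f\}$ a.s.\ for some integrable selection $f$ with $E(f) = a$, then by the Aumann definition \eqref{eqn:expectation} we have $E(F) = \{E(g) : g \in S^1_F\}$, and since $F$ is a singleton a.s.\ its only selection (up to a.s.\ equality) is $f$, so $E(F) = \{E(f)\} = \{a\}$. The real content is the ``only if'' direction, where we assume $E(F) = \{a\}$ and must deduce that $F$ is a.s.\ a singleton.

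\medskip

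For the forward direction, first I would reduce to the case $a = 0$ by translating: replacing $F$ by $F - \{a\}$ (equivalently, replacing each selection $f$ by $f - a$) shows it suffices to prove that $E(F) = \{0\}$ forces $F$ to be a random singleton. The key observation is that for \emph{every} integrable selection $f \in S^1_F$ we have $E(f) = 0$. Now fix one selection $f_0 \in S^1_F$ (which exists since $F$ is integrable). For any other selection $f \in S^1_F$, the difference $f - f_0$ is an integrable $\frak X$-valued random variable with $E(f - f_0) = 0$. The plan is to show $f = f_0$ a.s., which forces $S^1_F$ to be a single point and hence $F = \{f_0\}$ a.s.\ by the characterization that $F$ is determined by $S^1_F$.

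\medskip

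To force $f = f_0$, I would exploit decomposability of $S^1_F$ (recall from \cite{Hia} that $S^1_F$ is decomposable with respect to $\mathscr A$). Given two selections $f, f_0 \in S^1_F$ and any set $A \in \mathscr A$, the spliced function $1_A f + 1_{A^c} f_0$ again lies in $S^1_F$, so its expectation must vanish:
\begin{equation}
E\bigl(1_A f + 1_{A^c} f_0\bigr) = 0 = E(f_0) = E\bigl(1_A f_0 + 1_{A^c} f_0\bigr).
\end{equation}
Subtracting gives $E\bigl(1_A (f - f_0)\bigr) = 0$ for every $A \in \mathscr A$. Since this holds for all measurable $A$, the $\frak X$-valued integrable random variable $f - f_0$ has vanishing integral over every set, and a standard argument (testing against a norming sequence of functionals in the dual $\frak X^*$, using separability of $\frak X$, to reduce to the scalar fact that a real integrable function with zero integral on every measurable set is zero a.s.) yields $f - f_0 = 0$ a.s. Hence every selection coincides with $f_0$, so $S^1_F = \{f_0\}$ is a singleton, and therefore $F(\omega) = \{f_0(\omega)\}$ a.s.

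\medskip

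The step I expect to be the main obstacle is the last one: passing from ``$E(1_A(f-f_0)) = 0$ for all $A$'' to ``$f = f_0$ a.s.'' in a general separable Banach space rather than in $\mathbb R$. In the scalar case this is elementary, but in the Banach-valued setting one must either invoke separability to select a countable weak-$*$ dense family of unit functionals in $\frak X^*$ and apply the scalar result coordinatewise, or appeal directly to the fact that a Bochner-integrable function whose integral vanishes on every measurable set is a.s.\ zero. I would make sure the Banach-space hypotheses in force (separability of $\frak X$, and the integrability assumptions guaranteeing Bochner integrability) are enough to justify this, which they are, so no extra structure beyond what the paper already assumes is needed.
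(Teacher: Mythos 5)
Your proof is correct and takes essentially the same route as the paper's: both reduce to $a=0$ by translation and exploit decomposability of $S^{1}_{F}$ to splice two selections over a measurable set $A$, so that the expectation of the spliced selection differs from $0$ unless the selections coincide a.s. The only differences are organizational --- the paper argues contrapositively (non-degenerate $\Rightarrow$ two distinct Castaing selections $\Rightarrow$ a nonzero element of $E(F)$) while you argue directly that every selection equals a fixed $f_{0}$ --- and you spell out the Bochner-integral fact (that $E\bigl(1_{A}(f-f_{0})\bigr)=0$ for all $A\in\mathscr A$ forces $f=f_{0}$ a.s.) which the paper uses implicitly.
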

\begin{proof}
We may assume $a=0$ without loss of generality thanks to translation. The
 sufficiency is obvious. For the converse,  assume $F\in L^{1}(\Omega, K(\frak X))$ is non-degenerate. By the Castaing representation Lemma \ref{lem:2.2}, there exist at least two different $\frak X$-valued functions $f_{1}, f_{2}\in S_{F}^{1}$. If one of $f_{1}, f_{2}$ is not mean zero, then the result is obtained. We assume $E(f_{1})=E(f_{2})=0$ and $f_{1}\neq f_{2}$ with a positive probability. Since $(\Omega, {\mathscr A}, P)$ is non-atomic, there exists a measurable partition $\left \{A, B\right \}$ of $\Omega$ such that $0<P(A)<1$ and
\begin{equation*}
\begin{split}
E(F)&\ni E(1_{A}f_{1}+1_{B}f_{2})=E(1_{A}f_{1})+E(1_{B}f_{2})\\
&=E(1_{A}f_{1})-E(1_{A}f_{2})  \ \ \ (since \ E(f_{2})=E(1_{A}f_{2}+1_{B}f_{2})=0)\\
&=E(1_{A}(f_{1}-f_{2}))\neq 0.
\end{split}
\end{equation*}
\end{proof}


 By using Lemma \ref{mean0}, it is convenient to judge some set-valued stochastic processes are not set-valued martingales. There are some examples.

 \begin{example}\label{ex:1}
 Let $G\in{\cal L}^{2}(K_{c}(\frak X))$ be non-degenerate. Then the set-valued stochastic integral $\left \{\int_{0}^{t}G_{s}dB_{s}: t\in[0,T]\right \}$ is not a set-valued martingale.

  In fact, by the definition of $I_{t}(G)=\int_{0}^{t}G_{s}dB_{s}$, we know $I_{0}(G)=\left \{0\right \}$ a.s. Then $E(I_{0}(G))=\left \{0\right \}$. But there exists $t>0$ such that $I_{t}(G)$ is non-degenerate. By Lemma \ref{mean0}, $E(I_{t}(G))\neq E(I_{0}(G))$, then $\left \{\int_{0}^{t}G_{s}dB_{s}: t\in[0,T]\right \}$ is not a set-valued martingale.

  In \cite{Zha09}, we know that it is a set-valued submartingale.
\end{example}

\begin{example}\label{ex:2}
 Now let us concentrate on the martingale representation Theorem \ref{kis}.

   By using Lemma \ref{mean0}, it is easy to see that there is no non-degenerate set-valued martingale such that its expectation is a singleton. Particularly, there is no non-degenerate  set-valued martingale with expectation zero. From this point of view, the martingale representation Theorem \ref{kis} given by  Kisielewicz in \cite{Kis2014} does not hold for any non-degenerate set-valued martingale since $F_{0}=\left \{0\right \}$ a.s. In fact, in  Theorem \ref{kis}, if the subset $\mathscr G$ is not a singleton, then the  integral process $\left \{I_{t}(\mathscr G):=\int_{0}^{t}{\mathscr G}dB_{\tau}: 0\leq t\leq T\right \}$ is non-degenerate. $\left \{I_{t}(\mathscr G): 0\leq t\leq T\right \}$ is not a non-degenerate set-valued martingale since $I_{0}(\mathscr G)=\left \{0\right \}$ a.s.

  If the set $\mathscr G$ only includes one $\frak X$-valued integrable stochastic process, Theorem \ref{kis} becomes the classical martingale representation theorem.
  \end{example}

 \begin{example}\label{eg1}
Take two different stochastic processes $f,g\in {\cal L}^{2}(\mathbb R)$ and set $$\xi_{t}=\int_{0}^{t}f_{s}dB_{s}, \ \ \eta_{t}=\int_{0}^{t}g_{s}dB_{s}.$$ For each $t\in[0,T]$, define
$$M_{t}=cl\left\{\lambda \xi_{t}+(1-\lambda)\eta_{t}: \lambda\in[0,1]\cap\mathbb Q\right\},
$$
i.e., $M_{t}$ is the segment of $\int_{0}^{t}f_{s}dB_{s}$ and $\int_{0}^{t}g_{s}dB_{s}$. It is clear that $\left \{M_{t}: t\in[0,T]\right \}$  is a non-degenerate interval-valued stochastic process. But it is not an interval-valued martingale since $M_{0}=\left \{0\right \}$ a.s.

We know that,  $M_{t}=\left[\min\left \{\xi_{t}, \eta_{t}\right \}, \max\left \{\xi_{t}, \eta_{t}\right \}\right]$, and that it is a non-degenerate interval-valued submartingale.
In fact, the process $\min\left \{\xi_{t}, \eta_{t}\right \}$ is a real-valued supermartingale and the process $\max\left \{\xi_{t}, \eta_{t}\right \}$ is a real-valued submartingale.
So that $E\left(M_{t}|\mathscr A_{s}\right)\supset M_{s}$ for $0\leq s\leq t\leq T$.
\end{example}

 \begin{remark}
 For discrete time set-valued martingales, Ezzaki and Tahri (Corollary 3.8 of \cite{Ezzaki}) proposed that in a separable RNP (Radon Nikodym Property) Banach space $\frak X$, a sequence $\left \{F_{n}, n=1,2,\cdots\right \}$ is a uniformly integrable $K_{bc}(\frak X)$-valued martingale if and only if it admits a Castaing representation of regular martingale selections such that $ {\lim\inf}_{n\rightarrow\infty}\int_{\Omega} \|F_{n}\|dP<\infty$. Unfortunately, the `if' part may not hold. For example, let $\left \{f_{n}, n=1,\cdots\right \}$ be a  uniformly integrable $\mathbb R$-valued martingale. For each $n$, we assume that the value of $f_{n}$ changes its signs with positive probability. Set $M_{n}=cl\left \{\lambda f_{n}+2(1-\lambda)f_{n}; \lambda\in [0, 1]\cap \mathbb Q\right \}$. Then $M_{n}=[\min\{f_{n}, 2f_{n}\}, \max\{f_{n}, 2f_{n}\}]$. As a manner similar to Example \ref{eg1}, we know that $\left \{M_{n}; n=1,\cdots\right \}$ is not an interval-valued martingale. But $ cl\left \{\lambda f_{n}+2(1-\lambda)f_{n}; \lambda\in [0, 1]\cap \mathbb Q\right \}$ is a Castaing representation of regular martingale selections with
 $$ {\lim\inf}_{n\rightarrow\infty}\int_{\Omega}\|M_{n}\|dP\leq 2{\lim\inf}_{n\rightarrow\infty}\int_{\Omega}|f_{n}|dP<\infty.$$
 \end{remark}

{\bf Proof of Theorem \ref{main}:}
\begin{proof}
(i)$\Longrightarrow$ (ii): Assume there exists $G\in {\mathcal L}^{2}(K(\frak X))$ such that $M_{t}=E(M_{0})+\int_{0}^{t}G_{s}dB_{s}$ a.s. for each $t$. Then
$E(M_{t})=cl\left \{E(M_{0})+E(\int_{0}^{t}G_{s}dB_{s})\right \}=E(M_{0})$, which implies $E\left(\int_{0}^{t}G_{s}dB_{s}\right)=\left \{0\right \}$.
By Lemma \ref{mean0}, we obtain that $G$ is a degenerate set-valued stochastic process, which is denoted by $\left \{g\right \}$ ($ g\in {\cal L}^{2}(\frak X)$ ). Hence (ii) holds with $C=E(M_{0})$.

(ii)$\Longrightarrow$ (iii): Since $E(M_{0})$ is a  closed  subset of the separable Banach space $\frak X$, there exists a sequence $\left \{x_{1},\cdots,x_{n},\cdots\right \}\subset \frak X$ such that $E(M_{0})=cl \left \{x_{1},\cdots, x_{n},\cdots\right \}$. Then we have
\begin{equation*}
\begin{split}
M_{t}&=cl \left \{x_{1},\cdots, x_{n},\cdots\right \}+\left \{\int_{0}^{t}g_{s}dB_{s}\right \}\\
&=cl \left \{x_{1}+\int_{0}^{t}g_{s}dB_{s},\cdots, x_{n}+\int_{0}^{t}g_{s}dB_{s},\cdots\right \}
=:cl\left \{f^{1}_{t},\cdots,f^{n}_{t},\cdots \right \}.
\end{split}
\end{equation*}
Apparently, such a sequence $\left \{f^{1}_{t},\cdots,f^{n}_{t},\cdots \right \}$ satisfies the requirement.

(iii)$\Longrightarrow$ (i):
Assume there exists an $\frak X$-valued $\mathbb A$-martingale sequence $\left \{f^1,\cdots,f^n,\cdots\right \}$ such that $M_{t}=cl\left \{f_{t}^1,\cdots,f_{t}^n,\cdots\right \}$ a.s. for all $t$. Then by the classical martingale representation theorem, there exists $g\in {\cal L}^{2}(\frak X)$ such that $f^{1}_{t}=E(f^{1}_{0})+\int_{0}^{t}g_{s}dB_{s}$. Notice that for any $i, j$, the difference $f^{i}-f^{j}$ is non-random and independent of $t$, then $f^{n}_{t}=E(f^{n}_{0})+\int_{0}^{t}g_{s}dB_{s}$ for each $n$. Thus
\begin{equation*}
\begin{split}
M_{t}&=cl\left \{E(f^{1}_{0})+\int_{0}^{t}g_{s}dB_{s},\cdots, E(f^{n}_{0})+\int_{0}^{t}g_{s}dB_{s},\cdots\right \}\\
&=cl\left \{E(f^{1}_{0}),\cdots,E(f^{n}_{t}),\cdots\right \}+\left \{\int_{0}^{t}g_{s}dB_{s}\right \}=E(M_{0})+\int_{0}^{t}G_{s}dB_{s},
\end{split}
\end{equation*}
where $G=\left \{g\right \}$. In fact, each $f^{i}_{0}$  is constant a.s  since $\mathscr A_{0}$ is trivial.
 \end{proof}

Interval-valued martingale  is concrete and easy to deal with. It is sufficient to consider the endpoints. Now we give the representation theorem of interval-valued martingale and some examples when taking $\frak X=\mathbb R$.

\begin{theorem}
Let $M=\left \{M_{t}=[a_{t}, b_{t}]:0\leq t\leq T\right \}$ be a $K_{c}(\mathbb R)$-valued $\mathbb A$-martingale. Then there exists an interval-valued stochastic process $G\in{\cal L}^2(K(\mathbb R))$ such that
\begin{equation}\label{eqn:interval1}
M_{t}=[E(a_{0}), E(b_{0})]+\int_{0}^{t}G_{s}dB_{s}, \ \ for \ every \ 0\leq t\leq T
\end{equation}
if and only if $b_{t}-a_{t}$ is a constant a.s. for each $t$ and $G$ degenerates to a singleton process $\left \{g\right \}$ with $g\in {\cal L}^{2}(\mathbb R)$.
\end{theorem}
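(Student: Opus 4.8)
The plan is to reduce everything to the two real-valued endpoints and then combine the classical martingale representation theorem with Lemma \ref{mean0}. Since $M=\{[a_t,b_t]\}$ is a $K_c(\mathbb R)$-valued $\mathbb A$-martingale, Theorem 3.1.1 of \cite{thesis}, recalled at the end of Section \ref{author_sec:2}, guarantees that $a=\{a_t\}$ and $b=\{b_t\}$ are both real-valued $\mathbb A$-martingales; I will treat them as square-integrable, consistent with the $\mathcal L^2$ setting in which \eqref{eqn:interval1} is stated. Throughout I use that $E(M_0)=[E(a_0),E(b_0)]$ and that for a singleton process $G=\{g\}$ one has $\int_0^t G_s\,dB_s=\{\int_0^t g_s\,dB_s\}$, so adding this to an interval merely translates both endpoints by the same scalar $\int_0^t g_s\,dB_s$.

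For the necessity direction, suppose some $G\in\mathcal L^2(K(\mathbb R))$ satisfies \eqref{eqn:interval1}. Taking expectations and using the martingale identity $cl\{E(M_t)\}=cl\{E(M_0)\}$ exactly as in the proof of Theorem \ref{main}, I would obtain $E(\int_0^t G_s\,dB_s)=\{0\}$ for every $t$. Lemma \ref{mean0} then forces $\int_0^t G_s\,dB_s$ to be a random singleton, and arguing as in Example \ref{ex:1} (a non-degenerate integrand produces a non-degenerate integral for some $t$) this makes $G$ itself degenerate, say $G=\{g\}$ with $g\in\mathcal L^2(\mathbb R)$. Writing $c_t:=\int_0^t g_s\,dB_s$, the representation becomes $M_t=[E(a_0)+c_t,\,E(b_0)+c_t]$, so matching endpoints gives $a_t=E(a_0)+c_t$ and $b_t=E(b_0)+c_t$, whence $b_t-a_t=E(b_0)-E(a_0)$ is a deterministic constant for every $t$.

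For the sufficiency direction, assume $b_t-a_t$ is a.s. equal to a constant for each $t$. Since $a$ and $b$ are martingales, their difference $b-a$ is a martingale; being a.s. deterministic at each $t$, this martingale is constant in time, so $b_t-a_t=b_0-a_0=E(b_0)-E(a_0)$ for all $t$ (recall $\mathscr A_0$ is trivial, so $a_0,b_0$ are a.s. constants). Applying the classical martingale representation theorem, Theorem 4.3.4 of \cite{oksendal}, to the real martingale $a$ yields $g\in\mathcal L^2(\mathbb R)$ with $a_t=E(a_0)+\int_0^t g_s\,dB_s$; then $b_t=a_t+(E(b_0)-E(a_0))=E(b_0)+\int_0^t g_s\,dB_s$ shares the same integrand. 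Setting $G:=\{g\}$ gives $\int_0^t G_s\,dB_s=\{\int_0^t g_s\,dB_s\}$ and hence $M_t=[E(a_0),E(b_0)]+\int_0^t G_s\,dB_s$, which is precisely \eqref{eqn:interval1}.

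The only genuinely delicate point is the implication, inside the necessity argument, from ``$\int_0^t G_s\,dB_s$ is a singleton for every $t$'' to ``$G$ is a singleton process'': one must rule out a non-degenerate integrand whose stochastic integral collapses to a point, which is exactly the content of Example \ref{ex:1} together with the definition of the set-valued integral through $S^2_{I_t(G)}(\mathscr A_t)=\overline{de}\{\int_0^t g_s\,dB_s: g\ \text{a selection of}\ G\}$. The remaining steps are routine endpoint bookkeeping for interval arithmetic and an appeal to the scalar representation theorem.
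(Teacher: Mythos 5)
Your proof is correct, but it is organized differently from the paper's. The paper disposes of this theorem in one line---``This is a direct consequence of Theorem \ref{main}''---i.e., one checks condition (iii) of Theorem \ref{main} for $M_t=[a_t,b_t]$ (via the countable family of rational convex combinations $\lambda a_t+(1-\lambda)b_t$, whose pairwise differences $(\lambda-\lambda')(b_t-a_t)$ are non-random and time-independent precisely when $b_t-a_t$ is a constant) and then quotes the equivalence (i)$\Leftrightarrow$(ii)$\Leftrightarrow$(iii). You instead unwind that theorem's proof and re-run it directly on the endpoints: Theorem 3.1.1 of \cite{thesis} to get that $a$ and $b$ are real-valued martingales, Lemma \ref{mean0} together with the identity $cl\left\{E(M_t)\right\}=cl\left\{E(M_0)\right\}$ for the necessity half, and the classical representation theorem applied to $a$ for the sufficiency half. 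The ingredients are exactly those used inside the proof of Theorem \ref{main}, so nothing genuinely new is required; what your version buys is a self-contained and more concrete argument that makes the endpoint bookkeeping $b_t-a_t=E(b_0)-E(a_0)$ explicit, at the cost of duplicating work the paper has already packaged into Theorem \ref{main}. Two caveats, both shared with the paper rather than gaps of yours: the passage from ``$\int_0^t G_s\,dB_s$ is a singleton for every $t$'' to ``$G$ is a degenerate process'' is asserted in the paper's proof of (i)$\Rightarrow$(ii) with the same brevity you use (resting on the claim in Example \ref{ex:1} that a non-degenerate integrand yields a non-degenerate integral for some $t>0$); and the appeal to the classical representation theorem requires square-integrability of the endpoint martingales, which you flag explicitly while the paper leaves it implicit.
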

\begin{proof}
This is a direct consequence of Theorem \ref{main}.

\end{proof}

\begin{example}
Let $B=\left \{B_{t}: 0\leq t\leq T\right \}$ be the real-valued Brownian motion as above. For $t\in[0,T]$, set $M_{t}=\left[e^{B_{t}-\frac{t}{2}}, 1+e^{B_{t}-\frac{t}{2}}\right]$, then $\left \{M_{t}: t\in[0,T]\right \}$ is an interval-valued martingale with the following martingale representation
$$
M_{t}=[1,2]+\left \{\int_{0}^{t}e^{B_{s}-\frac{s}{2}}dB_{s}\right \} \  a.s. \ for\ all\ t.
$$

 In fact, $E\left(\int_{0}^{T}(e^{B_{t}-\frac{t}{2}})^{2}dt\right)=e^{T}-1<\infty$, then $\left \{e^{B_{t}-\frac{t}{2}}: t\in[0,T]\right \}\in {\cal L}^{2}(\mathbb R)$.  By Ito's formula,
$$d(e^{B_{t}-\frac{t}{2}})=-\frac{1}{2}e^{B_{t}-\frac{t}{2}}dt+e^{B_{t}-\frac{t}{2}}dB_{t}+\frac{1}{2}e^{B_{t}-\frac{t}{2}}dt=e^{B_{t}-\frac{t}{2}}dB_{t}.
$$
Then $e^{B_{t}-\frac{t}{2}}=1+\int_{0}^{t}e^{B_{s}-\frac{s}{2}}dB_{s}>0$ and $E\left(e^{B_{t}-\frac{t}{2}}\right)=1$. Thus $E(M_{t})=[1,2]$.
\end{example}
\begin{example}
Setting $M_{t}=\left[e^{B_{t}-\frac{t}{2}}, 2e^{B_{t}-\frac{t}{2}}\right]$, then $\left \{M_{t}: t\in[0,T]\right \}$ is an interval-valued martingale since both the endpoints are real-valued martingales and $E(M_{t})=[1,2]$. But $M_{t}\neq [1, 2]+\int_{0}^{t}\left[e^{B_{s}-\frac{s}{2}}, 2e^{B_{s}-\frac{s}{2}}\right]dB_{s}$.  $M_{t}$ does not have the martingale representation since the difference between endpoints of $M_{t}$ is $e^{B_{t}-\frac{t}{2}}$, not a constant.
\end{example}

 Interval-valued martingale is a concrete example of set-valued martingales with wide applications in statistical modelling and practical fields such as econometrics, mathematical finance etc. For example, Sun et al. \cite{sun} proposed  a threshold autoregressive model based on interval-valued data. In the model,  $\left \{u_{t}: t\in[0,T]\right \}$  is an interval-valued martingale difference sequence(the difference between two intervals is the Hukuhara difference which guarantees the difference of two identical sets is zero).  There is an assumption in the model: $E(u_{t}|I_{t-1})=[0,0]$ where $I_{t-1}$ is the information set up to time $t-1$. Unfortunately, the assumption is not appropriate  for non-degenerate interval-valued stochastic process.

\section*{Acknowledgment}
 We finished the draft of this paper when Jinping Zhang stayed in Kyoto university as a visiting scholar. We would like to thank Kosuke Yamoto and Toru Sera for their valuable academic discussion. The research of Jinping Zhang was supported by National Science Foundation of Beijing Municipality (No.1192015) and the Construct Program of the Key Discipline in Hunan Province. The research of Kouji Yano was supported by JSPS KAKENHI grant No.'s JP19H01791, JP19K21834 and JP18K03441 and by JSPS Open Partnership Joint Research Projects grant No. JPJSBP120209921.

\vskip 1cm


\bibliographystyle{model1a-num-names}
\bibliography{<your-bib-database>}

\end{document}